
\documentclass[letterpaper, 10 pt, conference]{ieeeconf}  

\IEEEoverridecommandlockouts                              

\overrideIEEEmargins                                      




\usepackage[T1]{fontenc}    
\usepackage{hyperref}       
\usepackage{url}            
\usepackage{booktabs}       
\usepackage{amsfonts}       
\usepackage{nicefrac}       
\usepackage{microtype}      
\usepackage{amsmath}
\usepackage{amssymb}
\usepackage{comment}
\usepackage{pdfpages}
\usepackage{subcaption}
\usepackage{lipsum} 
\usepackage{booktabs} 
\usepackage{adjustbox} 
\def\argmin{\mathop{\rm argmin}}
\usepackage{afterpage}
\usepackage[mathscr]{euscript}
\usepackage{algorithm}
\usepackage{algorithmic}
\usepackage{stfloats}
\usepackage{multirow}
\usepackage{adjustbox}
\usepackage{booktabs}
\usepackage{multicol}
\newtheorem{theorem}{Theorem}

\newtheorem{lemma}{Lemma}
\newtheorem{definition}{Definition}
\newtheorem{remark}{Remark}
\newtheorem{assumption}{Assumption}
\newtheorem{proposition}{Proposition}

\usepackage{array,multirow,graphicx}
\usepackage{float}
\usepackage{caption}
\captionsetup{}

\title{\LARGE \bf
Achieving optimal complexity guarantees for a class of bilevel convex optimization problems 
}


\author{Sepideh Samadi$^{1}$,  Daniel Burbano$^{2}$, and Farzad Yousefian$^{1}$
\thanks{*This work was funded in part by the NSF under CAREER grant ECCS-$1944500$, in part by the ONR under grant N$00014$-$22$-$1$-$2757$, and in part by the DOE under grant DE-SC$0023303$.}
\thanks{$^{1}$Samadi and Yousefian are with the Department of Industrial and Systems Engineering, Rutgers University, Piscataway 08854, NJ, USA.
        {\tt\small sepideh.samadi@rutgers.edu, farzad.yousefian@rutgers.edu}. $^{2}$Burbano is with the Department of Electrical and Computer Engineering, Rutgers University, Piscataway 08854, NJ, USA.
        {\tt\small daniel.burbano@rutgers.edu}.}%
}

\begin{document}

\maketitle
\thispagestyle{empty}
\pagestyle{empty}

\begin{abstract}
We design and analyze a novel accelerated gradient-based algorithm for a class of bilevel optimization problems. These problems have various applications arising from machine learning and image processing, where optimal solutions of the two levels are interdependent. That is, achieving the optimal solution of an upper-level problem depends on the solution set of a lower-level optimization problem.
 We significantly improve existing iteration complexity to $\mathcal{O}(\epsilon^{-0.5})$ for both suboptimality and infeasibility error metrics, where $\epsilon>0$ denotes an arbitrary scalar. In addition, contrary to existing methods that require solving the optimization problem sequentially (initially solving an optimization problem to approximate the solution of the lower-level problem followed by a second algorithm), our algorithm concurrently solves the optimization problem.  To the best of our knowledge, the proposed algorithm has the fastest known iteration complexity, which matches the optimal complexity for single-level optimization. We conduct numerical experiments on sparse linear regression problems to demonstrate the efficacy of our approach. 
\end{abstract}

\section{Introduction}\label{sec:intro}

In this paper, we consider a class of bilevel convex optimization problems of the form
\begin{equation}
\min \  f(x),\quad \textrm {s.t.} \quad  x \in \argmin_{x \in \mathbb{R}^n} \ \{\bar{h}(x)\triangleq  h(x) + \omega(x)  \},
\label{prob:centr}
\end{equation}
where $f: \mathbb{R}^n \rightarrow  \mathbb{R} $ is a smooth convex function and $\bar{h}(x)\triangleq  h(x) + \omega(x)$  with $h: \mathbb{R}^n \rightarrow  \mathbb{R} $ and $w: \mathbb{R}^n \rightarrow  \mathbb{R} $ being smooth convex and nonsmooth convex functions, respectively.
Note that the optimization problem in equation \eqref{prob:centr} can be written as the minimization of the upper-level function $f(\cdot)$ over the set $X^*$ as 
\begin{equation} \label{prob:main 2} 
\min \  f(x),\quad\textrm {s.t.} \quad  x \in X^*,
\end{equation}
where $X^*$ is the nonempty convex set of all optimal solutions of the composite inner optimization problem given by
\begin{equation} \label{prob:main 3} 
\min \  \bar{h}(x)\triangleq  h(x) + \omega(x),\quad \textrm {s.t.} \quad  x \in \mathbb{R}^n.
\end{equation}
The optimization problem in \eqref{prob:centr} or equivalently in \eqref{prob:main 2}-\eqref{prob:main 3} is known as {\it the optimal solution selection problem} and also, {\it the simple bilevel optimization problem}. 
This problem has gained attention in the last few years due to its emergence in addressing over-parameterized machine learning, ill-posed optimization in image processing, risk-averse learning, and portfolio optimization~\cite{friedlander2008exact, beck2014first,amini2019iterative,merchav2023convex} among others. 
The pressing challenge consists of developing fast iterative algorithms that solve the optimization problem in \eqref{prob:main 2}-\eqref{prob:main 3}.

Within the existing literature on single-level composite optimization problems, it is well-known that the accelerated proximal method (APM) or FISTA (Fast Iterative Shrinkage-Thresholding Algorithm) admits the fastest iteration complexity $\mathcal{O}(\epsilon^{-0.5})$~\cite{beck2009fast}. However, to the best of our knowledge, no existing optimization methods for simple bilevel problems have been reported to reach a comparable convergence rate to the single-level problem. Motivated by this gap in the literature, here, we develop a novel method (R-APM: regularized accelerated proximal method) that solves the bilevel problem with a rate of $\mathcal{O}(\epsilon^{-0.5})$. Specifically, the proposed algorithm leverages the structure of the FISTA method by incorporating a regularized gradient mapping that depends on the weighted information of the gradient of the lower-level (only the smooth component) and upper-level objective functions, respectively.

\begin{table*}[t]
  \centering
  \caption{Comparison between our proposed method and the most relevant existing strategies.}
\label{table:lit}
  \begin{adjustbox}{width=\textwidth}
    \begin{tabular}{|c|c|c|c|c|c|}
      \hline
      \multirow{2}{*}{Reference} & \multirow{2}{*}{Method} & \multicolumn{2}{c|}{Objective Functions} & \multicolumn{2}{c|}{Iteration Complexity} \\
      \cline{3-6}
      & & Upper level & Lower level &  Suboptimality & Infeasibility \\
      \hline
(Solodov) \cite{solodov2007explicit}& Explicit Descent Method&Convex, Smooth &Convex, Smooth  &Asymptotic &Asymptotic  \\
 (Solodov) \cite{solodov2007bundle}& Bilevel Bundle Method&Convex, Nonsmooth & Convex, Nonsmooth &Asymptotic &Asymptotic  \\
(Beck et al.) \cite{beck2014first} &  MNG & Strongly Convex, Differentiable & Convex, Smooth &  Asymptotic & $\mathcal{O}(\epsilon^{-2})$  \\
(Helou et al.) \cite{helou2017}&FIBA &Convex& Convex, Smooth&Asymptotic & Asymptotic \\
(Malitsky et al.) \cite{malitsky2017chambolle} & Tseng’s Method &  Convex& Convex& Asymptotic & $\mathcal{O}(\epsilon^{-1})$  \\
     (Sabach et al.) \cite{sabach2017first} & BiG-SAM & Strongly Convex, Smooth & Convex &  Asymptotic & $\mathcal{O}(\epsilon^{-1})$  \\
(Amini et al.) \cite{amini2019iterative}&IR-IG&Convex, Nonsmooth&Strongly Convex, Nonsmooth&  $\mathcal{O}(\epsilon^{-\frac{1}{0.5-b}}), b \in (0,0.5)$  & Asymptotic \\
     (Kaushik et al.) \cite{kaushik2021method} & a-IRG &  Convex, Nonsmooth &  Convex, Nonmooth & $\mathcal{O}(\epsilon^{-4})$ & $\mathcal{O}(\epsilon^{-4})$ \\
     (Jiang et al.) {\cite{jiang2022conditional}} &{CG-BiO} &{ Convex, Smooth}&{ Convex, Smooth} & $\mathcal{O}(\epsilon^{-1})$ &$\mathcal{O}(\epsilon^{-1})$ \\
(Merchav et al.) \cite{merchav2023convex}& Bi-SG&Convex & Convex &  $\mathcal{O}(\epsilon^{-\frac{1}{1-\alpha}})$  & $\mathcal{O}(\epsilon^{-\frac{1}{\alpha}}), \alpha \in (0.5,1)$    \\
(Shen et al.) \cite{shen2023online}& Structure Exploiting Algorithm&Convex&Convex&$\mathcal{O}(\epsilon^{-2})$&$\mathcal{O}(\epsilon^{-2})$ \\
      \hline
      {\bf This work} & R-APM & Convex, Smooth &  Convex, Nonsmooth  (composite)& $\mathcal{O}(\epsilon^{-0.5})$ & $\mathcal{O}(\epsilon^{-0.5})$  \\
      \hline
    \end{tabular}
  \end{adjustbox}
\end{table*}
While the literature on bilevel optimization methods is not as extensive as that for single-level problems, there have been several notable efforts in this area since the seminal work of Tikhonov~\cite{tikhonov1963solution}. Specifically, in \cite{solodov2007explicit}, the explicit descent method was proposed to solve the optimization problem in \eqref{prob:centr}, assuming that the upper and lower-level functions are both convex. Their theoretical analysis, however, indicated that the algorithm exhibits asymptotic convergence. This result was further extended in~\cite{solodov2007bundle} to the case where the upper and lower-level functions are non-smooth. Similarly, in \cite{friedlander2008exact}, the authors studied asymptotic convergence properties of bilevel optimization algorithms, while in \cite{helou2017}, the authors introduced the $\epsilon$-subgradient method to solve simple bilevel problems and studied its asymptotic convergence. Specifically, the authors considered the upper-level objective function to be convex. They introduced two different algorithms, namely, the Fast Iterative Bilevel Algorithm (FIBA) and Incremental Iterative Bilevel Algorithm (IIBA), that consider smooth and non-smooth lower-level objective functions, respectively.
Alternatively, the Minimal Norm Gradient (MNG) was introduced in \cite{beck2014first}, assuming the upper-level objective function is smooth and strongly convex. The authors found that the lower-level objective function reaches the iteration complexity $\mathcal{O}(\epsilon^{-2})$. This rate was later improved to $\mathcal{O}(\epsilon^{-1})$ as reported in~\cite{sabach2017first}. 
In \cite{malitsky2017chambolle}, the authors considered convex functions for both levels and obtained an iteration complexity of $\mathcal{O}(\epsilon^{-1})$ for infeasibility, while in \cite{amini2019iterative} the Iterative Regularized Incremental projected (sub)Gradient (IR-IG) algorithm was introduced considering the upper-level is strongly convex.    
More recent progress in addressing this class of bilevel problems include the averaging Iteratively Regularized Gradient (a-IRG) introduced in \cite{kaushik2021method}. In this study, the authors provided a complexity of $\mathcal{O}(\epsilon^{-4})$ for both suboptimality and infeasibility metrics, in the case where the lower-level problem is a variational inequality problem (capturing optimization as an instance). In distributed regimes, regulation-based gradient tracking method was proposed in \cite{yousefian2021bilevel} to solve a distributed convex simple bilevel optimization problem over networks.
In \cite{gong2021bi}, the Dynamic Barrier Gradient Descent (DBGD) methods was introduced for both constrained and lexicographic optimization and provided convergence with general nonconvex functions. 
\cite{shen2023online} provided convergence rates for the convex simple bilevel optimization problem by leveraging the duality theory.

Bi-Sub-Gradient (Bi-SG) method was proposed in \cite{merchav2023convex} for convex simple bilevel optimization problem with nonsmooth upper-level objective function and iteration complexity of $\mathcal{O}(\epsilon^{-\frac{1}{1-\alpha}})$ and $\mathcal{O}(\epsilon^{-\frac{1}{\alpha}}), \alpha \in (0.5,1)$ were derived for suboptimality and infeasibility, respectively. In addition, \cite{jiang2022conditional} obtained convergence rates for suboptimality and infeasibility of both convex and nonconvex upper-level functions with convex lower-level settings by introducing the CG-Based Bilevel Optimization (CG-BiO). When both functions are convex, CG-BiO achieves an iteration complexity of $\mathcal{O}(\epsilon^{-1})$. Also, when the upper-level function is nonconvex, $\mathcal{O}({\epsilon_{f}}^{-2})$ and $\mathcal{O}(\frac{1}{\epsilon_{f}\epsilon_h})$ were obtained for the suboptimality and infeasibility metrics, respectively. 
 
When compared to existing works, our proposed method achieves the best-known convergence rate as illustrated in Table \ref{table:lit}.
Specifically, the contributions are twofold.
\begin{itemize}
\item[(i)] We propose a novel algorithm termed R-APM to solve simple bilevel problems. We conduct a thorough theoretical analysis and provide convergence rates in terms of infeasibility and suboptimality metrics. Importantly, the complexity $\mathcal{O}(\epsilon^{-0.5})$ matches that of single-level optimization which is known to exhibit the fastest speed.

\item[(ii)] The proposed method works in a concurrent manner. That is, unlike sequential algorithms that require initially solving an optimization problem to approximate the solution of the lower-level problem followed by a second algorithm~\cite{jiang2022conditional}, our algorithm does not require such an approximation. The concurrent nature of our algorithm renders it computationally efficient and makes it suitable for real-time applications where the objective function might slowly vary over time.
%
\end{itemize}
The remainder of this paper is organized as follows. In Section~\ref{Sec: 2}, we discuss the outline of  presented algorithm and its main assumptions and required preliminaries for addressing problem~\eqref{prob:centr}. The convergence analysis regarding infeasibility and suboptimality are provided in Section \ref{section 3}. 
Section \ref{sec: 4} includes numerical experiments and the comparisons with some other existing methods. At the end, concluding remarks appear in Section \ref{section 5}.




\section{Mathematical preliminaries}\label{Sec: 2}
\subsection{Notation}
For $x, y \in \mathbb{R}^n$, we use $\langle x, y\rangle$ for their inner product. For any vector $x \in \mathbb{R}^n$, let $\|x\|$ denote the $\ell_2$-norm. For any $x\in \mathbb {R}$, we let $ |x |$ denote absolute value of $x$. The proximal map of function  $g: \mathbb{R}^n \rightarrow (-\infty, \infty]$ for any $x \in \mathbb{R}^n$ is denoted by $\text{prox}_{g}[x]$ and is formally defined in Definition~\ref{def1}.
We let $\nabla f(x) \in \mathbb{R}^n$ be the gradient of the function  $f: \mathbb{R}^n \rightarrow  \mathbb{R} $, with the domain $\mbox{dom}(f)$ at point $x \in \mbox{dom}(f)$. For a convex function $f: \mathbb{R}^n \rightarrow  \mathbb{R} $,  a vector $\widetilde{\nabla}f(x) \in \mathbb{R}^n$ is called a subgradient of $f$ at $x$ if $f(x) + \langle\widetilde{\nabla}f(x),y-x\rangle \leq f(y) $ for all $y \in \mbox{dom}(f)$. Also, we let $\partial f(x) $ denote the subdifferential set of this function $x \in \mbox{dom}(f)$. We let $\Pi_{X}(x)$ denote Euclidean projection operator of vector $x$ onto the set $X$. Also, $\mbox{dist}(x, X) = \|x- \Pi_{X}(x) \|$ represents the distance of vector $x$ from the nonempty closed convex set $X$.

\begin{definition}\label{def1}\em
Given a function $g: \mathbb{R}^n \rightarrow (-\infty, \infty]$, the proximal map of $g$ is defined for all $x\in \mathbb{R}^n$ as
	\begin{align*}
	\text{prox}_{g}[x] \triangleq \underset{u \in \mathbb{R}^n}{\text{argmin}} \ \{  g(u) + \tfrac{1}{2}\|u-x\|^2  \}.
	\end{align*}
\end{definition}

\begin{lemma}[{\cite[Theorem 6.39]{Beck2017}}]\label{lem:RAPM1} 
Let $g: \mathbb{R}^n \rightarrow (-\infty, \infty]$ be proper, closed, and convex. Then for any $u\in \mathbb{R}^n$ and $\gamma > 0$, $(u-z) \in \gamma \partial g(z)$ if and only if $z\triangleq \text{prox}_{\gamma g}[u].$
\end{lemma}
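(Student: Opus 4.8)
The statement to be proven is Lemma~\ref{lem:RAPM1}, the standard characterization of the proximal operator via the subdifferential. Here is how I would approach it.

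\medskip

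The plan is to exploit the first-order optimality conditions for the strongly convex minimization problem defining $\mathrm{prox}_{\gamma g}[u]$. First I would recall that, by Definition~\ref{def1}, $z \triangleq \mathrm{prox}_{\gamma g}[u]$ means precisely that $z$ is the unique minimizer of the function $\varphi(w) \triangleq \gamma g(w) + \tfrac12\|w - u\|^2$ over $\mathbb{R}^n$ (uniqueness and existence following from properness, closedness, convexity of $g$ and $1$-strong convexity of the quadratic term). Since $\varphi$ is a sum of the convex function $\gamma g$ and the differentiable convex function $w \mapsto \tfrac12\|w-u\|^2$ whose domain is all of $\mathbb{R}^n$, the subdifferential sum rule applies without qualification, giving $\partial \varphi(w) = \gamma \partial g(w) + (w - u)$.

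\medskip

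The key step is then the equivalence chain: $z$ minimizes $\varphi$ over $\mathbb{R}^n$ if and only if $0 \in \partial \varphi(z)$ (Fermat's rule for convex functions), which by the sum rule above holds if and only if $0 \in \gamma \partial g(z) + (z - u)$, i.e. if and only if $(u - z) \in \gamma \partial g(z)$. Reading this chain in one direction gives: if $(u-z) \in \gamma\partial g(z)$ then $z$ is a minimizer of $\varphi$, hence by uniqueness $z = \mathrm{prox}_{\gamma g}[u]$. Reading it in the other direction: if $z = \mathrm{prox}_{\gamma g}[u]$ then $z$ minimizes $\varphi$, so $(u-z) \in \gamma\partial g(z)$. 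This establishes both implications.

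\medskip

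I do not anticipate a genuine obstacle here, as this is a textbook result (indeed it is cited from \cite[Theorem 6.39]{Beck2017}); the only points requiring a modicum of care are (a) justifying that the subdifferential sum rule holds with equality — which is immediate because the quadratic term is finite and differentiable everywhere, so no constraint qualification is needed — and (b) observing that $\gamma > 0$ lets us freely multiply the inclusion $0 \in \partial g(z) + \gamma^{-1}(z-u)$ through by $\gamma$ to land on the stated form $(u-z)\in\gamma\partial g(z)$. One could also give an entirely elementary proof bypassing Fermat's rule: directly verify that $(u-z)\in\gamma\partial g(z)$ is equivalent, via the subgradient inequality $g(w) \ge g(z) + \langle \widetilde\nabla g(z), w-z\rangle$, to $\varphi(w) \ge \varphi(z) + \tfrac12\|w-z\|^2$ for all $w$, which exhibits $z$ as the strict global minimizer; but the sum-rule route is cleaner and is the one I would write up.
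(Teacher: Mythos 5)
Your proof is correct: the Fermat's-rule-plus-sum-rule argument is the standard derivation, and your observation that no constraint qualification is needed (the quadratic term being finite and differentiable everywhere) is exactly the right point of care. Note that the paper does not prove this lemma at all --- it simply cites \cite[Theorem 6.39]{Beck2017} --- so there is no in-paper argument to compare against; your write-up matches the textbook proof that the citation points to.
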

In some of the rate results, we utilize the weak sharpness property, defined as follows.
\begin{definition}[Weak sharp minima{~\cite[Definition 1.1]{studniarski1999weak}}]\label{def:weaksharp}
Consider the inner optimization problem in \eqref{prob:centr}. We say $X^*_{\bar h} \triangleq \hbox{arg}\min_{x\in \mathbb{R}^n}\bar{h}(x)$ is $\alpha$-weakly sharp if there exists $\alpha>0$ such that for all $x \in \mathbb{R}^n$, we have 
$$\bar{h}(x) - \inf_{x\in \mathbb{R}^n} \bar{h}(x) \geq \alpha\  \mbox{dist}(x, X^*_{\bar h}).$$
\end{definition}

\subsection{Regularized optimization problem}
Throughout, we consider the bilevel problem in \eqref{prob:centr} under the following main assumptions. 
\begin{assumption}\label{assump:accel} Consider problem \eqref{prob:centr}.

\noindent (i) Functions $ f, h,$ and $\omega: \mathbb{R}^n \rightarrow (-\infty, \infty]$ are proper, closed, and convex functions.

 \noindent (ii) $f$ and $h$ are $L_f$-smooth and $L_h$-smooth, respectively. 

 \noindent (iii) The set $\argmin_{x \in \mathbb{R}^n} \{ h(x) + \omega(x)  \}$ is nonempty.
\end{assumption}
We define the following functions for all $x\in\mathbb{R}^n$ as 
\begin{align}
     &f_{\eta}(x) \triangleq h(x) + \eta f(x),  \\
    &F_{\eta}(x) \triangleq  f_{\eta}(x)+ \omega(x) , \\
    &q(x) \triangleq  \text{prox}_{\gamma {\omega}}\left[x - \gamma \left(\nabla h (x) + \eta \nabla f(x)\right)\right],
\end{align} 
where $\eta>0$ and $\gamma>0$ are two positive scalars.
Then, the regularized optimization problem \eqref{prob:centr} is given by
\begin{equation} \label{prob:reg} 
\begin{split}
\min \  f_{\eta}(x)+ \omega(x),\quad \textrm {s.t.} \quad  x \in \mathbb{R}^n.
\end{split}
\end{equation}

Next, we provide an important result for the regularized function $F_\eta$ that will be crucial when analyzing the convergence of the proposed algorithm.   
\begin{lemma}\label{lem:RAPM2}
	Let Assumption \ref{assump:accel} hold. Let $\gamma,\eta > 0$ be  such that $\gamma \leq {1}/\left({L}_h+\eta{L}_f\right) $. Then, for any $x, y \in \mathbb{R}^n$ 
 \begin{align*}
F_{\eta}(x) - F_{\eta}(q(y)) \geq \frac{\langle y-x,q(y)-y\rangle }{\gamma} + \frac{\| q(y)-y \|^2}{2\gamma}.
\end{align*} 
\end{lemma}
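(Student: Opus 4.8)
The plan is to prove this by the standard proximal-gradient descent argument, adapted to the regularized composite objective $F_\eta = f_\eta + \omega$ with $f_\eta = h + \eta f$. First I would record two ingredients from Assumption \ref{assump:accel}. Since $\nabla f_\eta = \nabla h + \eta\nabla f$, the triangle inequality shows $f_\eta$ is $(L_h+\eta L_f)$-smooth; combining this with the step-size condition $\gamma \le 1/(L_h+\eta L_f)$, the descent lemma gives, for every $y\in\mathbb{R}^n$,
\[
f_\eta(q(y)) \le f_\eta(y) + \langle \nabla f_\eta(y),\, q(y)-y\rangle + \tfrac{1}{2\gamma}\|q(y)-y\|^2 .
\]
Second, since $q(y) = \text{prox}_{\gamma\omega}\!\left[y-\gamma\nabla f_\eta(y)\right]$, applying Lemma \ref{lem:RAPM1} with $g=\omega$, $u = y-\gamma\nabla f_\eta(y)$, and $z = q(y)$ yields $\tfrac{1}{\gamma}(y - q(y)) - \nabla f_\eta(y) \in \partial\omega(q(y))$.

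Next I would use convexity twice. Convexity of $f_\eta$ gives $f_\eta(x) \ge f_\eta(y) + \langle \nabla f_\eta(y),\, x-y\rangle$, and the subgradient inequality for $\omega$ at the point $q(y)$, with the subgradient identified above, gives $\omega(x) \ge \omega(q(y)) + \langle \tfrac{1}{\gamma}(y-q(y)) - \nabla f_\eta(y),\, x-q(y)\rangle$. Adding these two inequalities and then subtracting the descent-lemma bound on $f_\eta(q(y))$ (after adding $\omega(q(y))$ to both sides so that $F_\eta(q(y)) = f_\eta(q(y))+\omega(q(y))$ appears), all terms carrying $\nabla f_\eta(y)$ cancel, leaving
\[
F_\eta(x) - F_\eta(q(y)) \ge \tfrac{1}{\gamma}\langle y - q(y),\, x - q(y)\rangle - \tfrac{1}{2\gamma}\|q(y)-y\|^2 .
\]

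Finally, a short algebraic rearrangement closes the argument: writing $x - q(y) = (x-y) - (q(y)-y)$ inside the inner product and collecting the $\|q(y)-y\|^2$ terms turns the right-hand side into exactly $\tfrac{1}{\gamma}\langle y-x,\, q(y)-y\rangle + \tfrac{1}{2\gamma}\|q(y)-y\|^2$, which is the stated inequality. I do not expect any genuinely hard step here; the points requiring care are tracking the signs in the cancellation of the $\nabla f_\eta(y)$ terms, handling the smoothness of $f_\eta$ through the single constant $L_h+\eta L_f$ (so that $\gamma \le 1/(L_h+\eta L_f)$ makes the quadratic term have coefficient $\tfrac{1}{2\gamma}$), and the bookkeeping in the final identity.
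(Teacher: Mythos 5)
Your proposal is correct and follows essentially the same route as the paper: the descent lemma for the $(L_h+\eta L_f)$-smooth function $f_\eta$, convexity of $f_\eta$, the subgradient inequality for $\omega$ obtained from the prox characterization in Lemma~\ref{lem:RAPM1}, summation, and a final inner-product rearrangement. The only (cosmetic) difference is that you fold the step-size condition $\gamma \le 1/(L_h+\eta L_f)$ into the descent lemma up front, whereas the paper carries the constant $L_\eta/2$ and invokes $\gamma \le 1/L_\eta$ only after summing the inequalities.
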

\begin{proof}
Note that the function $f_\eta$ is  $L_{\eta}$-smooth where $L_{\eta}\triangleq {L}_h+\eta{L}_f$. Thus, for any $y\in\mathbb{R}^n$ we have
\begin{align*}
	f_\eta(q(y)) \leq f_\eta(y) + \langle q(y)-y,\nabla f_\eta(y)	\rangle+ \tfrac{L_\eta}{2}\| q(y)-y\|^2.
\end{align*}
Also, from the convexity of $f_\eta$, for any $x\in\mathbb{R}^n$ we have 
\begin{align*}
	  f_\eta(y) 	 \leq f_\eta(x) -\langle x-y,\nabla f_\eta(y)\rangle.
\end{align*}
Summing the preceding relations, we obtain 
\begin{align}\label{eqn:smooth_f_eta_1}
	\langle x-q(y),\nabla f_\eta(y)	\rangle- \tfrac{L_\eta}{2}\| q(y)-y\|^2 \leq f_\eta(x) -f_\eta(q(y)) .
\end{align}
From Lemma~\ref{lem:RAPM1} and the definition of $q(y)$, we have $y - \gamma \nabla f_\eta(y) -q(y) \in \gamma \partial \omega(q(y)).$ Using this, from the definition of subgradients for $\omega$, we have
\begin{align}\label{eqn:smooth_f_eta_2}
 \langle x-q(y),\tfrac{y -q(y)}{\gamma}-  \nabla f_\eta(y)\rangle \leq \omega(x) - \omega(q(y)).
\end{align}
Summing \eqref{eqn:smooth_f_eta_1} and \eqref{eqn:smooth_f_eta_2} and invoking $\gamma \leq \frac{1}{L_\eta}$, we obtain 
\begin{align*}
	\tfrac{1}{\gamma}\langle x-q(y),y -q(y)	\rangle- \tfrac{1}{2\gamma}\| q(y)-y\|^2 \leq F_\eta(x) -F_\eta(q(y)) .
\end{align*}
This implies the desired inequality. 
\end{proof}

\section{The algorithm}
Motivated by the widespread adoption of FISTA~\cite{beck2009fast} as a highly effective approach for solving single-level composite optimization problems, we introduce a novel algorithm termed the Regularized Accelerated Proximal Method (R-APM), as outlined in Algorithm~\ref{alg:RAPM}, to solve the simple bilevel problem in \eqref{prob:centr}.
Our algorithm can be seen as a regularized adaptation of the FISTA method. The primary distinction in our R-APM method lies in the use of regularization in step \#3, where we incorporate the information of both upper and lower-level information as $\nabla h( y_{k}) + \eta {\nabla} f( y_{k})$ leveraging regularization. 
The constant parameter $\eta$ plays a critical role in deriving the convergence rates. An important step in our analysis will be centered on obtaining an explicit value of this parameter to guarantee fast convergence. In fact, as it will be shown in Theorem~\ref{thm:RAPM}, when the lower-level solution set admits a weakly-sharp property, if $\eta$ is chosen below a prescribed threshold, then the optimal complexity $\mathcal{O}(\epsilon^{-0.5})$ is achieved for both of the levels. This is a significant improvement to the complexity of $\mathcal{O}(\epsilon^{-1})$ in the recent work~\cite{jiang2022conditional} that is guaranteed under a similar set of assumptions. 
\begin{algorithm}   
 	\caption{Regularized Accelerated Proximal Method\\(R-APM)}
 	\begin{algorithmic}[1]\label{alg:RAPM}
 		\STATE {{\bf Input:} $y_1=x_0 \in \mathbb{R}^n$, $t_1=1$, scalars $\gamma,\eta > 0$ such that $\gamma  \leq 1/ \left(L_h + \eta L_f\right)$, $K\geq 1$.}
 		\FOR {$k = 1,2, \dots, {K}$}
 		\STATE   {$x_k := \text{prox}_{\gamma {\omega}}\left[ y_{k} -\gamma \left(\nabla h( y_{k}) + \eta {\nabla} f( y_{k})\right)\right]$}	
        \STATE {$t_{k+1} := 0.5+\sqrt{0.25+t_k^2}$}
        \STATE {$y_{k+1} := x_k + (t_k -1)(x_k-x_{k-1})/t_{k+1}$}
 		\ENDFOR
	\STATE {{\bf return:} $x_K$}
 	\end{algorithmic}
  \label{alg:1}
 \end{algorithm}

The next result provides a recursive inequality that will be used to prove the main convergence results of Algorithm \ref{alg:1}. The main steps in the proof are similar to those of Lemma~4.1 in \cite{beck2009fast}.
\begin{lemma}\label{lem:RAPM3}
	Consider Algorithm~\ref{alg:RAPM} and let Assumption~\ref{assump:accel} hold. Then, for any $x \in \mathbb{R}^n$ and $k\geq 1$ we have
 \begin{align*} 
\tfrac{2}{L_{\eta}}\left(t_k^2v_k^2(x)-t_{k+1}^2v_{k+1}^2(x)\right) \geq \|u_{k+1}(x)\|^2-\|u_{k}(x)\|^2,
\end{align*}
where we define for $k\geq 0$, $v_k(x)\triangleq F_\eta(x_k) - F_\eta(x)$ and $u_k(x)\triangleq t_kx_k -  (t_k-1)x_{k-1} -x$.
\end{lemma}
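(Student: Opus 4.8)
The plan is to mirror the argument of Lemma~4.1 in \cite{beck2009fast}, using our Lemma~\ref{lem:RAPM2} in place of the analogous descent estimate for single-level FISTA. First I would apply Lemma~\ref{lem:RAPM2} twice with $y=y_{k+1}$, so that $q(y_{k+1})=x_{k+1}$ (by step~\#3 of Algorithm~\ref{alg:RAPM}, noting $q(y)=\mathrm{prox}_{\gamma\omega}[y-\gamma\nabla f_\eta(y)]$): once with the first argument $x=x_k$, and once with the first argument $x=x$, the arbitrary point in the statement. This yields two inequalities of the form
\begin{align*}
F_\eta(x_k)-F_\eta(x_{k+1}) &\geq \tfrac{1}{\gamma}\langle y_{k+1}-x_k, x_{k+1}-y_{k+1}\rangle + \tfrac{1}{2\gamma}\|x_{k+1}-y_{k+1}\|^2,\\
F_\eta(x)-F_\eta(x_{k+1}) &\geq \tfrac{1}{\gamma}\langle y_{k+1}-x, x_{k+1}-y_{k+1}\rangle + \tfrac{1}{2\gamma}\|x_{k+1}-y_{k+1}\|^2,
\end{align*}
i.e.\ bounds on $v_k(x)-v_{k+1}(x)$ and $-v_{k+1}(x)$ respectively. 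Since $\gamma\le 1/L_\eta$, I would replace $1/\gamma$ by $L_\eta$ on the right-hand sides (the inner-product terms need to be handled carefully by sign, but the standard trick is to keep the clean form and scale at the end); for the proof it is cleanest to take $\gamma=1/L_\eta$ or simply carry $1/\gamma$ and note $1/\gamma\ge L_\eta$ where the inequality direction is favorable.

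Next I would form the convex combination $(t_{k+1}-1)\cdot[\text{first inequality}] + 1\cdot[\text{second inequality}]$, multiply through by $t_{k+1}$, and use the relation $t_{k+1}^2 - t_{k+1} = t_k^2$ (which is exactly what step~\#4, $t_{k+1}=0.5+\sqrt{0.25+t_k^2}$, guarantees). This produces
\begin{align*}
\tfrac{2}{L_\eta}\left(t_k^2 v_k(x) - t_{k+1}^2 v_{k+1}(x)\right) \geq \|t_{k+1}(x_{k+1}-y_{k+1})\|^2 + 2t_{k+1}\langle x_{k+1}-y_{k+1}, t_{k+1}y_{k+1}-(t_{k+1}-1)x_k - x\rangle.
\end{align*}
The right-hand side is then completed to a square: adding and subtracting $\|t_{k+1}y_{k+1}-(t_{k+1}-1)x_k-x\|^2$, the first two terms combine via the identity $\|a\|^2+2\langle a,b\rangle = \|a+b\|^2 - \|b\|^2$ into $\|t_{k+1}x_{k+1}-(t_{k+1}-1)x_k-x\|^2 - \|t_{k+1}y_{k+1}-(t_{k+1}-1)x_k-x\|^2$. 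The first of these is $\|u_{k+1}(x)\|^2$ by definition. For the second, I would substitute the momentum update from step~\#5 in the form $t_{k+1}y_{k+1} = t_{k+1}x_k + (t_k-1)(x_k-x_{k-1})$, so that $t_{k+1}y_{k+1}-(t_{k+1}-1)x_k - x = t_k x_k - (t_k-1)x_{k-1} - x = u_k(x)$; hence the second term is exactly $\|u_k(x)\|^2$, and the claimed recursion follows.

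The main obstacle I anticipate is purely bookkeeping: getting all the coefficients of the convex combination right so that the cross terms assemble into the telescoping square, and in particular verifying that step~\#5 of the algorithm produces precisely the vector $u_k(x)$ after the substitution — this is where the specific form of the extrapolation coefficient $(t_k-1)/t_{k+1}$ is essential and where a sign or index slip would break the argument. A secondary point to be careful about is the handling of the factor $1/\gamma$ versus $L_\eta$: since $v_k(x)$ can have either sign, one must ensure the scaling is applied only where it preserves the inequality, which is automatic if one derives the recursion first with $1/\gamma$ throughout and invokes $\gamma \le 1/L_\eta$ at the very end on the term $\tfrac{2}{L_\eta}$ (equivalently, one may simply assume $\gamma = 1/L_\eta$ without loss of generality for this lemma, as the subsequent results only use this choice).
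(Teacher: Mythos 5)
Your proposal is correct and follows essentially the same route as the paper's proof: two applications of Lemma~\ref{lem:RAPM2} at $y=y_{k+1}$ (once with $x_k$, once with the arbitrary $x$), combination with weights $t_{k+1}-1$ and $1$, multiplication by $t_{k+1}$ using $t_k^2=t_{k+1}^2-t_{k+1}$, completion of the square, and substitution of the momentum update $t_{k+1}y_{k+1}=t_{k+1}x_k+(t_k-1)(x_k-x_{k-1})$. Two side remarks: what both you and the paper actually derive is the recursion for $t_k^2 v_k(x)$, not $t_k^2 v_k^2(x)$ as the lemma statement (apparently a typo) reads; and your caveat about replacing $1/\gamma$ by $L_\eta$ is well taken, since the sign of the combined left-hand side is not controlled --- the clean fix is to take $\gamma=1/L_\eta$ or to state the recursion with $2\gamma$ in place of $2/L_\eta$, a point the paper's own proof glosses over.
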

\begin{proof}
Note that $x_{k+1}= q(y_{k+1})$. Thus, invoking Lemma~\ref{lem:RAPM2} for $x:=x_k$ and $ y:=y_{k+1}$, we obtain 
\begin{align*}
    2L_{\eta}^{-1}(v_k(x)-v_{k+1}(x)) &\geq \|x_{k+1}-y_{k+1}\|^2 \\ &+2\langle x_{k+1}-y_{k+1},y_{k+1}-x_k\rangle.
\end{align*} 
Invoking Lemma~\ref{lem:RAPM2} for $x:=x, y:=y_{k+1}$, we obtain 
$$-2L_{\eta}^{-1}v_{k+1}(x)\geq \|x_{k+1}-y_{k+1}\|^2 +2\langle x_{k+1}-y_{k+1},y_{k+1}-x\rangle. $$
Multiplying both sides of the first inequality by $t_{k+1}-1$ and adding it to the second inequality, we obtain 
\begin{align*}
     &2L_{\eta}^{-1}\left((t_{k+1}-1)v_k-t_{k+1}v_{k+1}\right) \geq t_{k+1}\|x_{k+1}-y_{k+1}\|^2 \\
    & +2 \langle x_{k+1}-y_{k+1},t_{k+1}y_{k+1}-(t_{k+1}-1)x_k-x \rangle.
\end{align*}
Multiplying the preceding relation by $t_{k+1}$, we obtain 
\begin{align*}
     &2L_{\eta}^{-1}\left(t_{k}^2v_k-t_{k+1}^2v_{k+1}\right) \geq \|t_{k+1}(x_{k+1}-y_{k+1})\|^2 \\
    & +2 t_{k+1}\langle x_{k+1}-y_{k+1},t_{k+1}y_{k+1}-(t_{k+1}-1)x_k-x \rangle,
\end{align*}
where we used $t_k^2=t_{k+1}^2-t_{k+1}$ in view of $t_{k+1} := 0.5+\sqrt{0.25+t_k^2}$. This implies that 
\begin{align*}
     2L_\eta^{-1}\left(t_{k}^2v_k-t_{k+1}^2v_{k+1}\right) &\geq  \|t_{k+1}x_{k+1}-(t_{k+1}-1)x_k-x\|^2 \\
     & -\|t_{k+1}y_{k+1}-(t_{k+1}-1)x_k-x\|^2.
\end{align*}
Note that from Algorithm~\ref{alg:RAPM} we have $$t_{k+1}y_{k+1} =t_{k+1}x_k +(t_k-1)(x_k-x_{k-1}).$$ Invoking this and the definition of $u_k(x)$, from the preceding inequality, we obtain 
\begin{align*}
     2L_{\eta}^{-1}\left(t_{k}^2v_k-t_{k+1}^2v_{k+1}\right) &\geq  \|u_{k+1}(x)\|^2-\|u_{k}(x)\|^2.
\end{align*}
\end{proof}
Next, we provide a bound on the regularized function $F_\eta$. This result can be shown using Lemma~\ref{lem:RAPM3} following the steps taken in the proof of Theorem 4.4 in \cite{beck2009fast}. 
\begin{lemma}\label{lem:F_eta}
Consider Algorithm~\ref{alg:RAPM} and let Assumption \ref{assump:accel} hold. Let $x \in \mathbb{R}^n$ be an arbitrary vector. Then, for any $k\geq 1$,
\begin{align}\label{ineq:RAPM_main}
    F_\eta(x_k) -F_\eta(x) \leq \frac{2 (L_h+\eta L_f)\|x_0-x\|^2}{(k+1)^2}.
\end{align}
\end{lemma}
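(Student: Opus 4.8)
The plan is to follow the proof of Theorem~4.4 in \cite{beck2009fast} almost verbatim, using Lemma~\ref{lem:RAPM3} as the driving recursion and Lemma~\ref{lem:RAPM2} (applied once, to the very first iterate) to pin down the initial constant. Fix $x\in\mathbb{R}^n$, write $v_k=v_k(x)$, $u_k=u_k(x)$, $L_\eta\triangleq L_h+\eta L_f$, and introduce the scalar sequences $a_k\triangleq \tfrac{2}{L_\eta}t_k^2 v_k$ and $b_k\triangleq \|u_k\|^2$ for $k\ge 1$. Lemma~\ref{lem:RAPM3} gives $a_k-a_{k+1}\ge b_{k+1}-b_k$, i.e. $a_{k+1}+b_{k+1}\le a_k+b_k$, so the sequence $(a_k+b_k)_{k\ge1}$ is non-increasing and therefore $a_k+b_k\le a_1+b_1$ for all $k\ge 1$.

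The second step is to show $a_1+b_1\le\|x_0-x\|^2$. Since $t_1=1$, one has $u_1=x_1-x$, hence $b_1=\|x_1-x\|^2$ and $a_1=\tfrac{2}{L_\eta}\bigl(F_\eta(x_1)-F_\eta(x)\bigr)$. Because $y_1=x_0$ and $x_1=q(y_1)$, Lemma~\ref{lem:RAPM2} applied with $y:=x_0$ (and the free point $x$) gives
\[ F_\eta(x)-F_\eta(x_1)\ \ge\ \tfrac{1}{\gamma}\langle x_0-x,\,x_1-x_0\rangle+\tfrac{1}{2\gamma}\|x_1-x_0\|^2 . \]
Expanding $\|x_1-x\|^2=\|x_1-x_0\|^2+2\langle x_1-x_0,\,x_0-x\rangle+\|x_0-x\|^2$ and rearranging turns this into $2\gamma\bigl(F_\eta(x_1)-F_\eta(x)\bigr)+\|x_1-x\|^2\le\|x_0-x\|^2$; using $\gamma^{-1}\ge L_\eta$ (at the largest admissible step $\gamma=1/L_\eta$ this becomes an equality) then yields $a_1+b_1\le\|x_0-x\|^2$.

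Combining the two steps and dropping the nonnegative term $b_k$, we get $\tfrac{2}{L_\eta}t_k^2 v_k\le a_k+b_k\le\|x_0-x\|^2$, i.e. $F_\eta(x_k)-F_\eta(x)\le \tfrac{L_\eta\|x_0-x\|^2}{2t_k^2}$. It then remains to bound $t_k$ from below: a one-line induction shows $t_k\ge (k+1)/2$, since $t_1=1=(1+1)/2$ and $t_{k+1}=0.5+\sqrt{0.25+t_k^2}\ge 0.5+t_k\ge (k+2)/2$. Substituting $t_k^2\ge (k+1)^2/4$ gives $F_\eta(x_k)-F_\eta(x)\le \tfrac{2(L_h+\eta L_f)\|x_0-x\|^2}{(k+1)^2}$, which is the claim.

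The monotonicity argument and the induction on $t_k$ are routine bookkeeping; the only point requiring care is the initialization bound, where one must retain the possibly negative term $a_1$ (rather than discarding it), expand $\|x_1-x\|^2$ correctly, and exploit the relation between $\gamma$ and $1/L_\eta$ so that the leading constant collapses exactly to $\|x_0-x\|^2$. This is precisely the place where the R-APM analysis inherits the tight FISTA constant.
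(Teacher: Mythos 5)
Your proof is correct and follows exactly the route the paper itself indicates for this lemma: the telescoping of $a_k+b_k$ via Lemma~\ref{lem:RAPM3}, the initialization via Lemma~\ref{lem:RAPM2} at $y:=y_1=x_0$, and the induction $t_k\ge (k+1)/2$, i.e.\ the steps of Theorem~4.4 in \cite{beck2009fast}. The one caveat, which you already flag parenthetically, is the passage from $2\gamma\bigl(F_\eta(x_1)-F_\eta(x)\bigr)+\|x_1-x\|^2\le\|x_0-x\|^2$ to $\tfrac{2}{L_\eta}\bigl(F_\eta(x_1)-F_\eta(x)\bigr)+\|x_1-x\|^2\le\|x_0-x\|^2$: since $\tfrac{2}{L_\eta}\ge 2\gamma$, this substitution goes the wrong way whenever $F_\eta(x_1)>F_\eta(x)$, so the constant $L_h+\eta L_f$ in \eqref{ineq:RAPM_main} is only justified at $\gamma=1/L_\eta$ (for strictly smaller $\gamma$ the clean statement replaces $L_\eta/2$ by $1/(2\gamma)$ throughout); the identical substitution is already made inside the paper's own Lemma~\ref{lem:RAPM3}, so this is an issue inherited from the paper rather than one you introduced.
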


\section{Main Result} \label{section 3}
We are now ready to present the main convergence rate results for both the suboptimality and infeasibility metrics of Algorithm \ref{alg:1}.
\begin{theorem}\label{thm:RAPM}
    Consider Algorithm~\ref{alg:RAPM} and let Assumption~\ref{assump:accel} hold. Let $x^* \in \mathbb{R}^n$ be an optimal solution to problem~\eqref{prob:centr}. Suppose $\gamma  \leq \frac{1}{L_\eta}$, where $L_\eta:=L_h + \eta L_f$. Then, the following results hold for any $K\geq 1$. 

\noindent (i) [Suboptimality upper-bound] Let $X^*$ and $f^*$ be the optimal solution set and the optimal $f$ for~\eqref{prob:centr}, respectively. Then, 
\begin{align*}
    f(x_K) -f^* \leq \frac{2 (L_h+\eta L_f)\, \mbox{dist}^2(x_0,X^*)}{\eta (K+1)^2}.
\end{align*}

\noindent (ii) [Infeasibility bounds] Let $X^*_{\bar h} \triangleq \hbox{arg}\min_{x\in \mathbb{R}^n}\bar{h}(x)$, $\bar{h}^*\triangleq \inf_{x\in \mathbb{R}^n} \bar{h}(x)$. Suppose $X^*_{\bar h} $ is $\alpha$-weak sharp and $\eta \leq \frac{\alpha}{2\|\nabla f(x^*)\|}$. Then, we have
\begin{align*} 
    &0 \leq h(x_K) -h^*   \leq \frac{4 (L_h+\eta L_f)\mbox{dist}^2(x_0,X^*)}{(K+1)^2},
\end{align*}
which implies
\begin{align*}
      0 \leq   \mbox{dist}(x_K,X^*_{\bar{h}})   \leq  \frac{4 (L_h+\eta L_f)\, \mbox{dist}^2(x_0,X^*)}{\alpha \, (K+1)^2}.
\end{align*}

\noindent (iii) [Suboptimality lower-bound] Let the conditions in (ii) hold. Then, we have
\begin{align*}
    f(x_K) -f^* &\geq  - \|\nabla f(x^*)\| \left(\frac{4 (L_h+\eta L_f)\, \mbox{dist}^2(x_0,X^*)}{\alpha \, (K+1)^2}\right).
\end{align*}
\end{theorem}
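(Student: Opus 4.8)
The plan is to build all three parts on the single master estimate of Lemma~\ref{lem:F_eta}, namely $F_\eta(x_K) - F_\eta(x) \le 2(L_h+\eta L_f)\|x_0-x\|^2/(K+1)^2$, evaluated at a judiciously chosen reference point $x$, together with the elementary decomposition $F_\eta = (h+\omega) + \eta f = \bar h + \eta f$.

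For part~(i), I would evaluate Lemma~\ref{lem:F_eta} at $x := \Pi_{X^*}(x_0)$, the projection of the initial point onto the upper-level solution set; then $\|x_0 - x\|^2 = \mbox{dist}^2(x_0,X^*)$. Since $x \in X^* \subseteq X^*_{\bar h}$ (an optimal solution of the bilevel problem is in particular a minimizer of $\bar h$), we have $\bar h(x_K) \ge \bar h(x) = \bar h^*$, hence $\bar h(x_K) - \bar h(x) \ge 0$. Dropping this nonnegative term from $F_\eta(x_K) - F_\eta(x) = (\bar h(x_K) - \bar h(x)) + \eta(f(x_K) - f(x))$ leaves $\eta(f(x_K) - f^*) \le 2(L_h+\eta L_f)\mbox{dist}^2(x_0,X^*)/(K+1)^2$, and dividing by $\eta$ gives the claim. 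The only subtlety is checking $X^* \subseteq X^*_{\bar h}$, which is immediate from the problem definition \eqref{prob:main 2}--\eqref{prob:main 3}.

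For part~(ii), the lower bound $h(x_K) \ge h^*$ needs care because $x_K$ need not lie in $X^*_{\bar h}$; I expect this to be the main obstacle. The idea: again take $x := \Pi_{X^*}(x_0) \in X^*_{\bar h}$, and now keep the $\bar h$-term. Convexity of $f$ gives $f(x_K) - f(x) \ge \langle \nabla f(x^*), x_K - x\rangle \ge -\|\nabla f(x^*)\|\,\|x_K - x\|$ — or, more carefully, one uses $f(x) - f(x_K) \le \langle \nabla f(x_K), x - x_K\rangle$ bounded via the eventual distance estimate; the cleanest route is to first establish the distance bound and feed it back. Concretely, from Lemma~\ref{lem:F_eta}, $\bar h(x_K) - \bar h^* \le 2(L_h+\eta L_f)\mbox{dist}^2(x_0,X^*)/(K+1)^2 + \eta(f(x) - f(x_K))$. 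Using $\alpha$-weak sharpness, $\bar h(x_K) - \bar h^* \ge \alpha\,\mbox{dist}(x_K, X^*_{\bar h})$, and bounding $f(x) - f(x_K) \le \|\nabla f(x^*)\|\,\mbox{dist}(x_K,X^*_{\bar h}) + (\text{terms that vanish on } X^*_{\bar h})$ via convexity and the optimality of $x^*$ on $X^*_{\bar h}$, one gets $(\alpha - \eta\|\nabla f(x^*)\|)\,\mbox{dist}(x_K,X^*_{\bar h}) \le 2(L_h+\eta L_f)\mbox{dist}^2(x_0,X^*)/(K+1)^2$. Since $\eta \le \alpha/(2\|\nabla f(x^*)\|)$ forces $\alpha - \eta\|\nabla f(x^*)\| \ge \alpha/2$, this yields the distance bound $\mbox{dist}(x_K,X^*_{\bar h}) \le 4(L_h+\eta L_f)\mbox{dist}^2(x_0,X^*)/(\alpha(K+1)^2)$. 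Plugging this back, $h(x_K) - h^* = \bar h(x_K) - \bar h^* + \omega(x^*) - \omega(x_K)$ needs the $\omega$-terms controlled — I would instead argue directly that $\bar h(x_K) - \bar h^* \ge 0$ always (since $\bar h^*$ is the infimum) and then show $h(x_K)-h^*$ and $\bar h(x_K)-\bar h^*$ agree up to the already-controlled quantities, giving $0 \le h(x_K) - h^* \le 4(L_h+\eta L_f)\mbox{dist}^2(x_0,X^*)/(K+1)^2$.

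For part~(iii), this is now a one-liner: convexity of $f$ and Cauchy--Schwarz give $f(x_K) - f^* = f(x_K) - f(x^*) \ge \langle \nabla f(x^*), x_K - x^*\rangle \ge -\|\nabla f(x^*)\|\,\|x_K - \Pi_{X^*_{\bar h}}(x_K)\|$ after noting $x^*$ can be taken as the relevant projection or, more robustly, $f^* = \min_{X^*} f \le f(\Pi_{X^*_{\bar h}}(x_K))$ is not quite what we want — rather I would use that $x^*$ minimizes $f$ over $X^*_{\bar h}$, so for any $z \in X^*_{\bar h}$, $f(z) \ge f^*$, and bound $f(x_K) - f^* \ge f(x_K) - f(\Pi_{X^*_{\bar h}}(x_K)) \ge -\|\nabla f(x_K)\|\,\mbox{dist}(x_K,X^*_{\bar h})$; replacing $\|\nabla f(x_K)\|$ by $\|\nabla f(x^*)\|$ up to the smoothness-scaled distance (which is lower order) and substituting the part~(ii) distance bound gives the stated lower bound. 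The main obstacle throughout is handling the nonsmooth component $\omega$ cleanly in the lower bounds of part~(ii); the resolution is to phrase everything in terms of $\bar h$ and $F_\eta$ rather than $h$ in isolation, and only at the very end translate back to $h$ using that $\omega$ shares the same minimizer structure.
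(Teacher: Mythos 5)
Your proposal is correct and follows essentially the same route as the paper: Lemma~\ref{lem:F_eta} evaluated at $\Pi_{X^*}(x_0)$ plus the decomposition $F_\eta=\bar h+\eta f$ for part~(i), the convexity/optimality estimate $f(x_K)-f^*\ge-\|\nabla f(x^*)\|\,\mbox{dist}(x_K,X^*_{\bar h})$ combined with weak sharpness and the threshold on $\eta$ for part~(ii) (you derive the distance bound before the function-value bound, the paper does the reverse, but the two are interchangeable via weak sharpness), and the combination of these for part~(iii). Your concern about the $\omega$-terms is resolved exactly as the paper implicitly does — the bounds in part~(ii) are really statements about $\bar h$, not $h$ alone — so no additional argument is needed.
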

\begin{proof}
\noindent (i) Consider the inequality \eqref{ineq:RAPM_main} for $K\geq 1$. Let us choose $x:=\Pi_{X^*}(x_0)$. Then, noting that $\Pi_{X^*}(x_0) \in X^*_{\bar{h}}$, we have $h(x_K) - h(x)\geq 0$. This, together with \eqref{ineq:RAPM_main}, imply that 
\begin{align*} 
    \eta(f(x_K) -f(x)) \leq \frac{2 (L_h+\eta L_f)\|x_0-x\|^2}{(K+1)^2}.
\end{align*}
This implies the suboptimality upper-bound in (i).

\noindent (ii) The lower bound holds by definition of $\bar{h}^*$. To show the upper bound,  let us define $\hat{x}_K \triangleq \Pi_{X^*_{\bar{h}}}(x_K)$. Note that $\hat{x}_K  \in X^*_{\bar{h}} $ by definition. In view of the optimality condition for the upper-level problem, we have $\langle\nabla f(x^*),\hat{x}_K-x^*\rangle \geq 0$. From the convexity of $f$ and the Cauchy-Schwarz inequality, we have
\begin{align}\label{eqn:lower_bound_dist}
f(x_K) - f^* & \geq \langle\nabla f(x^*),x_K-x^*\rangle \notag
\\
&=\langle\nabla f(x^*),x_K-\hat{x}_K\rangle+\langle\nabla f(x^*),\hat{x}-x^*\rangle \notag\\ 
&\geq \langle\nabla f(x^*),x_K-\hat{x}_K\rangle \notag\\
&\geq -\|\nabla f(x^*)\|  \mbox{dist}(x_K,X^*_{\bar{h}}).
\end{align}
Consider \eqref{ineq:RAPM_main} for $k:=K$ and $x:=\Pi_{X^*}(x_0)$. Substituting the preceding bound in \eqref{ineq:RAPM_main}, we obtain 
\begin{align*} 
    &h(x_K) -h^* -\eta\|\nabla f(x^*)\|  \mbox{dist}(x_K,X^*_{\bar{h}}) \leq \\
    &\frac{2 (L_h+\eta L_f)\mbox{dist}^2(x_0,X^*)}{(K+1)^2}.
\end{align*}
From the $\alpha$-weak sharp property of $X^*_{\bar{h}}$, we obtain

\begin{align*} 
    &(h(x_K) -h^*) -\eta\|\nabla f(x^*)\|  \frac{ (h(x_K) -h^*)}{\alpha}   \leq \\
&\frac{2 (L_h+\eta L_f)\mbox{dist}^2(x_0,X^*)}{(K+1)^2}
.
\end{align*}

In view of $\eta \leq \frac{\alpha}{2\|\nabla f(x^*)\|}$, we obtain 

\begin{align*} 
    &h(x_K) -h^*   \leq \frac{4 (L_h+\eta L_f)\mbox{dist}^2(x_0,X^*)}{(K+1)^2}.
\end{align*}
Also, regarding Definition~\ref{def:weaksharp} we have
\begin{align*} 
    &0 \leq   \alpha\mbox{dist}(x_K,X^*_{\bar{h}})  \leq h(x_K) -h^*.
\end{align*}
Therefore, we obtain the desired results.

\noindent (iii) This follows from combining \eqref{eqn:lower_bound_dist} and the inequality in part (ii).
\end{proof}
\begin{remark}[Complexity guarantees] \label{Remark:1} Note that Theorem~\ref{thm:RAPM} parts (i) and (ii) provide the compactly presented rate result of suboptimality and infeasibility, as follows.
$$|f(x_K)-f^*| \leq \frac{ 2 (L_h \eta^{-1}+ L_f)\, \mbox{dist}^2(x_0,X^*) }{(K+1)^2},$$
and
\begin{align*} 
    &h(x_K) -h^*   \leq \frac{4 (L_h+\eta L_f)\mbox{dist}^2(x_0,X^*)}{(K+1)^2}.
\end{align*}
Let $\epsilon_f>0$ be an arbitrary scalar such that $|f(x_K)-f^*| \leq \epsilon_f$. Hence, the number of iterations to guarantee this accuracy regarding suboptimality is as follows.
\begin{align*}
K & \geq \frac{\sqrt{2(L_h \eta^{-1}+ L_f)}\, \text{dist}(x_0,X^*)}{\sqrt{\epsilon_f}} - 1.
\end{align*}
Now, let $\epsilon_h>0$ be an arbitrary scalar such that $h(x_K) -h^*   \leq \epsilon_h$. Hence, the number of iterations to guarantee this accuracy regarding infeasibility is as follows.
\begin{align*}
K & \geq \frac{2  \sqrt{(L_h+\eta L_f)}\, \text{dist}(x_0,X^*)}{\sqrt{\epsilon_h}} - 1.
\end{align*}

Let $\epsilon=\epsilon_f=\epsilon_h >0$ be an arbitrary scalar such that $\mbox{max}\left(|f(x_K)-f^*|, h(x_K) -h^*  \right) \leq \epsilon$. 
Therefore, the number of iterations of R-APM to guarantee that both suboptimality and infeasibility to have this accuracy is
\begin{align*}
K & \geq \max\left(\frac{\sqrt{2(L_h \eta^{-1}+ L_f)}\, \text{dist}(x_0,X^*)}{\sqrt{\epsilon}} - 1, \right.\\
& \left. \frac{2  \sqrt{(L_h+\eta L_f)}\, \text{dist}(x_0,X^*)}{\sqrt{\epsilon}} - 1\right).
\end{align*}
Therefore, the iteration complexity  for R-APM for addressing problem \ref{prob:centr} is $\mathcal{O}(\epsilon^{-0.5})$.
\end{remark}
A key assumption leveraged in the analysis of the result in Theorem~\ref{thm:RAPM} is the weak-sharpness of the lower-level optimal solution set. Also, the threshold for $\eta$ depends on the weak-sharp parameter. When either the weak-sharp property is not met, or the parameter $\alpha$ is unavailable, a natural question is how to implement the proposed method. This is precisely addressed in the following. 
\begin{proposition}\label{propositon:RAPM}
    Consider Algorithm~\ref{alg:RAPM} and let Assumption~\ref{assump:accel} hold. Let $x^* \in \mathbb{R}^n$ be an optimal solution to problem~\eqref{prob:centr}. Suppose $\gamma  \leq \frac{1}{L_\eta}$, $\eta: = \frac{1}{K+1}$, where $L_\eta:=L_h + \eta L_f$. Then, the following results hold for any $K\geq 1$. 

\noindent (i) [Suboptimality upper-bound] Let $X^*$ and $f^*$ be the optimal solution set and the optimal $f$ for problem~\eqref{prob:centr}, respectively. Then, 
\begin{align*}
    f(x_K) -f^* \leq \frac{2 L_f\ \mbox{dist}^2(x_0,X^*)}{(K+1)^2}+ \frac{2 L_h\ \mbox{dist}^2(x_0,X^*)}{K+1}.
\end{align*}

\noindent (ii) [Infeasibility bounds] Let $X^*_{\bar h} \triangleq \hbox{arg}\min_{x\in \mathbb{R}^n}\bar{h}(x)$, $\bar{h}^*\triangleq \inf_{x\in \mathbb{R}^n} \bar{h}(x)$, and ${D}_f \triangleq \sup_{k=1,\ldots, K} (f(\Pi_{X^*_{\bar{h}}}(x_0)) - f(x_k))$. Then, we have
\begin{align*}
    0 \leq \bar{h}(x_K) -\bar{h}^* &\leq \frac{2 L_f\ \mbox{dist}^2(x_0,X^*_{\bar{h}})}{(K+1)^3}+ \frac{2 L_h\ \mbox{dist}^2(x_0,X^*_{\bar{h}})}{(K+1)^2}\\
    &+ \frac{{D}_f}{K+1}.
\end{align*}

\end{proposition}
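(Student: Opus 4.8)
The plan is to obtain both parts as direct specializations of Lemma~\ref{lem:F_eta}, the non-asymptotic $\mathcal{O}(1/(K+1)^2)$ bound on $F_\eta$, by choosing the free vector $x$ appropriately and then substituting $\eta=1/(K+1)$. The first thing to record is that $F_\eta(x)=f_\eta(x)+\omega(x)=h(x)+\eta f(x)+\omega(x)=\bar h(x)+\eta f(x)$, so Lemma~\ref{lem:F_eta} at $k=K$ reads
$$\big(\bar h(x_K)-\bar h(x)\big)+\eta\big(f(x_K)-f(x)\big)\ \leq\ \frac{2(L_h+\eta L_f)\,\|x_0-x\|^2}{(K+1)^2}\qquad\text{for all }x\in\mathbb{R}^n.$$
Note that with $\eta=1/(K+1)$ fixed in advance, $L_\eta=L_h+\eta L_f$ is just a constant, so the step-size condition $\gamma\leq 1/L_\eta$ is well posed and Lemma~\ref{lem:F_eta} applies verbatim.

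For part (i), I would take $x:=\Pi_{X^*}(x_0)$. Since $X^*\subseteq X^*_{\bar h}$, this point minimizes $\bar h$, so $\bar h(x_K)-\bar h(x)\geq 0$ and that term may be dropped. Using $f(x)=f^*$ and $\|x_0-x\|=\mbox{dist}(x_0,X^*)$ and dividing by $\eta>0$ gives $f(x_K)-f^*\leq 2(L_h+\eta L_f)\,\mbox{dist}^2(x_0,X^*)/(\eta(K+1)^2)$; substituting $\eta=1/(K+1)$ and separating the $L_h$ and $L_f$ contributions yields exactly the claimed $\tfrac{2L_f\,\mbox{dist}^2(x_0,X^*)}{(K+1)^2}+\tfrac{2L_h\,\mbox{dist}^2(x_0,X^*)}{K+1}$.

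For part (ii), the lower bound $\bar h(x_K)-\bar h^*\geq 0$ is immediate from $\bar h^*=\inf_x\bar h(x)$. For the upper bound I would instead set $x:=\Pi_{X^*_{\bar h}}(x_0)$, so $\bar h(x)=\bar h^*$ and $\|x_0-x\|=\mbox{dist}(x_0,X^*_{\bar h})$. Rearranging the displayed inequality gives
$$\bar h(x_K)-\bar h^*\ \leq\ \frac{2(L_h+\eta L_f)\,\mbox{dist}^2(x_0,X^*_{\bar h})}{(K+1)^2}+\eta\big(f(x)-f(x_K)\big),$$
and since $f(x)-f(x_K)=f(\Pi_{X^*_{\bar h}}(x_0))-f(x_K)\leq D_f$ (the index $k=K$ belongs to the range of the supremum defining $D_f$), the residual term is at most $\eta D_f=D_f/(K+1)$. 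Substituting $\eta=1/(K+1)$ in the remaining term produces the stated three-term bound.

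No serious obstacle is anticipated: both parts are one-line specializations of Lemma~\ref{lem:F_eta}. The points that need care are (a) choosing the right reference point for each metric — $\Pi_{X^*}(x_0)$ in (i), which makes the $\bar h$-difference nonnegative and hence droppable, versus $\Pi_{X^*_{\bar h}}(x_0)$ in (ii), which makes $\bar h(x)=\bar h^*$ exact; and (b) handling the residual $\eta(f(x)-f(x_K))$ in (ii), which is harmless when negative and is otherwise absorbed by the quantity $D_f$ precisely because $D_f$ is defined as a supremum over $k=1,\dots,K$ and therefore dominates the $k=K$ term.
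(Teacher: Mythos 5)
Your proof is correct and follows essentially the same route as the paper: both parts are obtained by specializing Lemma~\ref{lem:F_eta} with $x:=\Pi_{X^*}(x_0)$ for (i) and $x:=\Pi_{X^*_{\bar h}}(x_0)$ for (ii), then substituting $\eta=1/(K+1)$ and bounding the residual $\eta\bigl(f(\Pi_{X^*_{\bar h}}(x_0))-f(x_K)\bigr)$ by $D_f/(K+1)$. The paper's own proof is only a sketch, and your write-up supplies exactly the intended details.
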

\begin{proof}
\noindent (i) 
From Theorem~\ref{thm:RAPM} (i), we have
\begin{align*} 
    \eta(f(x_K) -f(x)) \leq \frac{2 (L_h+\eta L_f)\|x_0-x\|^2}{(K+1)^2}.
\end{align*}
From $\eta:=\frac{1}{K+1}$, $L_\eta:=L_h+\eta L_f$, and the definition of the distance function, we obtain the bound in (i).

\noindent (ii) The lower bound holds by definition of $\bar{h}^*$. To show the upper bound, consider \eqref{ineq:RAPM_main} for $K$ once again, but this time substitute $x:=\Pi_{X^*_{\bar{h}}}(x_0)$. 
\end{proof}
\begin{remark}[Constant ${D}_f$] Note that ${D}_f$ in Proposition~\ref{propositon:RAPM} does not necessarily grow with $K$. This is because the boundedness of the term $\limsup_{ K\to \infty} (f(\Pi_{X^*_{\bar{h}}}(x_0)) - f(x_k))$ can be guaranteed under several settings. For example, when $f(x) >- \infty$ for all $x$, this is the case. Another instance is when $\omega(x)$ is the indicator function of a compact set $X$, implying that, $x_k$ remains in the set. This is because the proximal map of $\gamma\omega$ in step \# 3 of Algorithm~\ref{alg:RAPM} will coincide with the Euclidean projection map onto the set $X$. 
\end{remark}
\begin{remark}[Complexity guarantees] Notably, Proposition~\ref{propositon:RAPM} provides guarantees for the upper-level metric (only an upper-bound) and the lower-level (both lower and upper bounds) in the absence of the weak-sharpness property.  Let $\epsilon=\epsilon_f=\epsilon_h >0$ be an arbitrary scalar such that $\mbox{max}\left(f(x_K)-f^*, h(x_K) -h^*  \right) \leq \epsilon$. Then, same as the discussion in Remark \ref{Remark:1} we reach to the iteration complexity for R-APM for addressing problem \eqref{prob:centr} that is $\mathcal{O}(\epsilon^{-1})$. Notably, this is still the best known complexity in the literature in the absence of the weak-sharpness property. 
\end{remark}
%
\section{Numerical Experiments }\label{sec: 4}
We consider the problem of solving a sparse linear regression problem of the form~\cite{jiang2022conditional}
 \begin{equation} \label{num:prob}
\begin{split}
& \min \quad f(x) \triangleq \frac{1}{2}\|{\bf{A}_{\text{val}}}(x) - b_{\text{val}}\|^2 \\
&\textrm {s.t.} \quad  x \in \argmin_{x \in \mathbb{R}^n} \{\bar{h}(x)\triangleq  \ell_{\text{tr}}(x) + \delta_X(x)  \},
\end{split}
\end{equation}
where $\ell_{\text{tr}} =\frac{1}{2}\|{\bf{A}_{\text{tr}}}(x) - b_{\text{tr}}\|^2 $ is a loss function and $\delta_X$ is an indicator function over set $X = \{x \,|\, \|x\|_1 \leq \lambda\}$, which is the feasible set for the lower-level optimization problem given by
 \begin{equation} \label{num:prob-lower}
\begin{split}
& \min \quad h(x) \triangleq \ell_{\text{tr}}(x),\quad\textrm {s.t.} \quad  x \in X.
\end{split}
\end{equation}
Here, the matrices $\bf{A}_{\text{tr}}$ and $\bf{A}_{\text{val}}$, and vectors ${b}_{\text{tr}}$ and ${b}_{\text{val}}$ are of appropriate dimensions. Note that the indicator function $\delta_X(x)$ represents the $\omega$ function in \eqref{prob:centr}.
The goal of this sparse regression problem is to find the parameter vector $x$ minimizing the loss function $f$ with respect to a training dataset (${\bf{A}_{\text{tr}}},b_{\text{tr}}$). It is important to note that this class of regression problems has multiple global minima. Therefore, the second objective function $h$, which is minimized over a validation dataset (${\bf{A}_{\text{val}}},b_{\text{val}}$), is utilized to select one of the minimizers of the training set. 

Following~\cite{jiang2022conditional}, we utilize the Wikipedia Math Essential dataset, which is divided into a 60\% validation subset 20\% training subset, and the remaining data is used for testing.
We consider two feasible sets for the lower-level optimization problem \eqref{num:prob-lower} given by $X_1 = \{x \,|\, \|x\|_1 \leq 1\}$ and $X_2 = \{x \,|\, \|x\|_1 \leq 2\}$.

We implement Algorithm~\ref{alg:1} by setting $\eta = \frac{1}{K+1}$ and  $\gamma \leq 1/ \left(L_h + \eta L_f\right) $, where $L_h=\lambda_{\text{max}}({ {\bf{A}}_{\text{tr}}}^T{\bf{A}}_{\text{tr}}) $ and $L_f=\lambda_{\text{max}}({{ \bf{A}}_{\text{val}}}^T{\bf{A}}_{\text{val}}) $, are Lipschitz parameters of functions $h$ and $f$, respectively. Note that the choice $\eta = \frac{1}{K+1}$ is done in view of unavailability of the weak-sharpness parameter (see Prop.~\ref{propositon:RAPM}).  
We compare the performance of our method with existing ones comprising MNG \cite{beck2014first}, BiG-SAM \cite{sabach2017first},  a-IRG \cite{kaushik2021method}, and CG-BiO \cite{jiang2022conditional}. For the numerical implementation of these methods, we use the Matlab codes from~\cite{jiang2022conditional} available in \cite{cgbio}.
We quantify the infeasibility and suboptimality through the performance metrics $|{\bar{h}}(x_k)-{\bar{h}}^*|$ and $|f(x_k)-f^*|$, respectively. Here, $x_k$ denotes the solution iterates of the algorithm, while ${\bar{h}}^*$ and ${f}^*$ represent the baseline optimal values found using the CVX method \cite{cvx}.
The results are shown in Figure \ref{results} for both feasible sets $X_1$ and $X_2$. Note that our method outperforms existing ones not only in terms of convergence rate but also in accuracy.

\begin{figure*}\label{1}
\centering
\begin{table}[H]
\centering
\setlength{\tabcolsep}{0.000001pt} 
\begin{tabular}{c|cc}
{ {$X$ in problem \eqref{num:prob}}\ \ }& { Suboptimality} & { Infeasibility}  \\
\hline\\
\rotatebox[origin=c]{90}{{ {$X_1= \{x \,|\, \|\mathbf{x}\|_1 \leq 1\}$}}}
&
\begin{minipage}{0.4\textwidth} 
\includegraphics[width=\textwidth, angle=0]{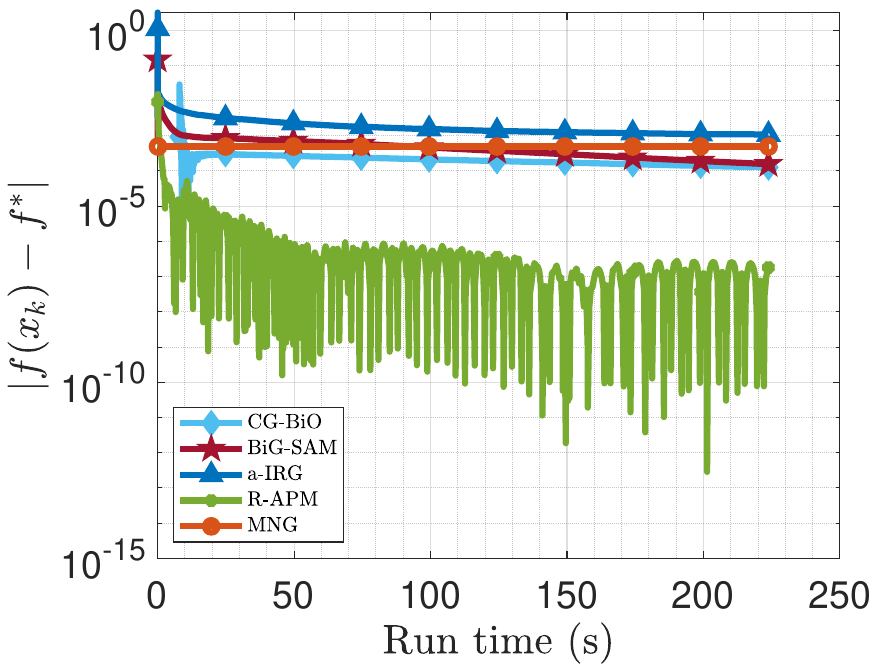}
\end{minipage}
&
\begin{minipage}{0.4\textwidth} 
\includegraphics[width=\textwidth, angle=0]{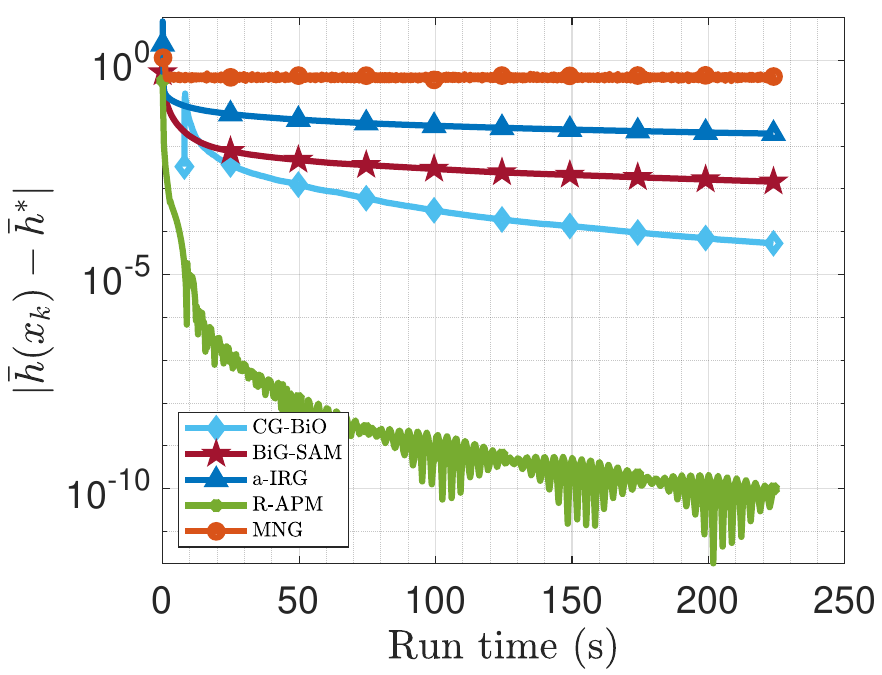}
\end{minipage}
\\
\hline\\
\rotatebox[origin=c]{90}{{\footnotesize {$X_2 = \{x \,|\, \|\mathbf{x}\|_1 \leq 2\}$}}}
&
\begin{minipage}{0.4\textwidth} 
\includegraphics[width=\textwidth, angle=0]{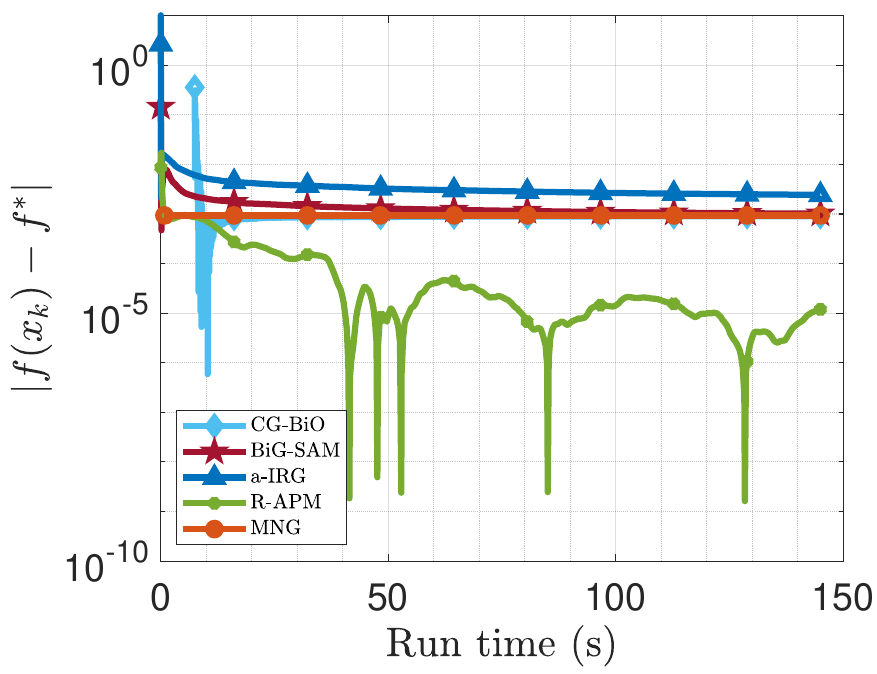}
\end{minipage}
&
\begin{minipage}{0.4\textwidth} 
\includegraphics[width=\textwidth, angle=0]{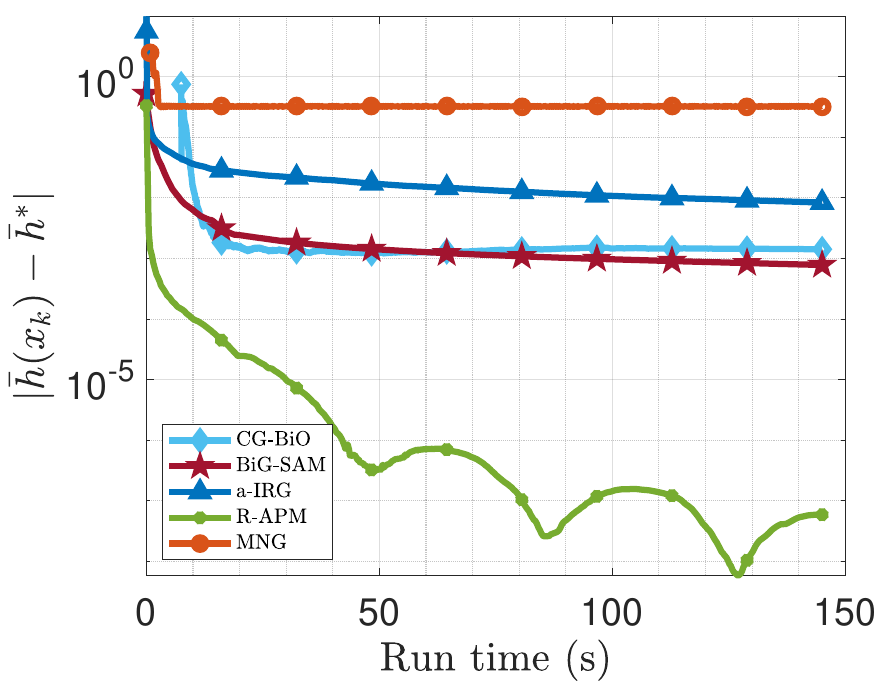}
\end{minipage}
\end{tabular}
\end{table}

\caption{Comparison of methods including R-APM, CG-BiO, BiG-SAM, a-IRG, and MNG  in terms of suboptimality and infeasibility in solving problem \eqref{num:prob} for two different feasible sets.}
\label{results}
\end{figure*}

\section{Conclusions and Future work} \label{section 5}
In this work, we proposed a novel accelerated algorithm to solve simple bilevel composite optimization problems with convex objective functions in both upper and lower-levels. The proposed algorithm leverages the structure of the FISTA method by incorporating a regularized gradient mapping that depends on the weighted information of the gradient of the
lower-level and upper-level objective functions. Remarkably, our analysis demonstrates that the proposed method achieves a convergence rate of $\mathcal{O}(\epsilon^{-0.5})$, thereby matching the fastest-known convergence rate for single-level optimization problems. This results in the fastest known algorithm among existing methods to solve simple bilevel problems. 
The effectiveness of our approach is illustrated through the practical application example of solving sparse linear regression problems. When compared to existing methods, our results indicate that our proposed algorithm not only converges faster but also exhibits a significantly higher accuracy. Future work aims to extend our results to the case where the upper and lower-level functions are non-convex and where multiple agents cooperate to solve a bilevel optimization problem.

\bibliographystyle{IEEEtran}
\bibliography{references}

\begin{thebibliography}{10}
\providecommand{\url}[1]{#1}
\csname url@samestyle\endcsname
\providecommand{\newblock}{\relax}
\providecommand{\bibinfo}[2]{#2}
\providecommand{\BIBentrySTDinterwordspacing}{\spaceskip=0pt\relax}
\providecommand{\BIBentryALTinterwordstretchfactor}{4}
\providecommand{\BIBentryALTinterwordspacing}{\spaceskip=\fontdimen2\font plus
\BIBentryALTinterwordstretchfactor\fontdimen3\font minus
  \fontdimen4\font\relax}
\providecommand{\BIBforeignlanguage}[2]{{%
\expandafter\ifx\csname l@#1\endcsname\relax
\typeout{** WARNING: IEEEtran.bst: No hyphenation pattern has been}%
\typeout{** loaded for the language `#1'. Using the pattern for}%
\typeout{** the default language instead.}%
\else
\language=\csname l@#1\endcsname
\fi
#2}}
\providecommand{\BIBdecl}{\relax}
\BIBdecl

\bibitem{friedlander2008exact}
M.~P. Friedlander and P.~Tseng, ``Exact regularization of convex programs,''
  \emph{SIAM Journal on Optimization}, vol.~18, no.~4, pp. 1326--1350, 2008.

\bibitem{beck2014first}
A.~Beck and S.~Sabach, ``A first order method for finding minimal norm-like
  solutions of convex optimization problems,'' \emph{Mathematical Programming},
  vol. 147, no. 1-2, pp. 25--46, 2014.

\bibitem{amini2019iterative}
M.~Amini and F.~Yousefian, ``An iterative regularized incremental projected
  subgradient method for a class of bilevel optimization problems,'' in
  \emph{2019 American Control Conference (ACC)}.\hskip 1em plus 0.5em minus
  0.4em\relax IEEE, 2019, pp. 4069--4074.

\bibitem{merchav2023convex}
R.~Merchav and S.~Sabach, ``Convex bi-level optimization problems with
  non-smooth outer objective function,'' \emph{arXiv preprint
  arXiv:2307.08245}, 2023.

\bibitem{beck2009fast}
A.~Beck and M.~Teboulle, ``A fast iterative shrinkage-thresholding algorithm
  for linear inverse problems,'' \emph{SIAM journal on imaging sciences},
  vol.~2, no.~1, pp. 183--202, 2009.

\bibitem{solodov2007explicit}
M.~Solodov, ``An explicit descent method for bilevel convex optimization,''
  \emph{Journal of Convex Analysis}, vol.~14, no.~2, p. 227, 2007.

\bibitem{solodov2007bundle}
\BIBentryALTinterwordspacing
M.~V. Solodov, ``A bundle method for a class of bilevel nonsmooth convex
  minimization problems,'' \emph{SIAM Journal on Optimization}, vol.~18, no.~1,
  pp. 242--259, 2007. [Online]. Available:
  \url{https://doi.org/10.1137/050647566}
\BIBentrySTDinterwordspacing

\bibitem{helou2017}
E.~S. Helou and L.~E. Sim{\~o}es, ``$\epsilon$-subgradient algorithms for
  bilevel convex optimization,'' \emph{Inverse Problems}, vol.~33, no.~5, p.
  055020, 2017.

\bibitem{malitsky2017chambolle}
Y.~Malitsky, ``Chambolle-pock and tseng’s methods: relationship and extension
  to the bilevel optimization,'' \emph{arXiv preprint arXiv:1706.02602}, p.~3,
  2017.

\bibitem{sabach2017first}
S.~Sabach and S.~Shtern, ``A first order method for solving convex bilevel
  optimization problems,'' \emph{SIAM Journal on Optimization}, vol.~27, no.~2,
  pp. 640--660, 2017.

\bibitem{kaushik2021method}
H.~D. Kaushik and F.~Yousefian, ``A method with convergence rates for
  optimization problems with variational inequality constraints,'' \emph{SIAM
  Journal on Optimization}, vol.~31, no.~3, pp. 2171--2198, 2021.

\bibitem{jiang2022conditional}
R.~Jiang, N.~Abolfazli, A.~Mokhtari, and E.~Y. Hamedani, ``Conditional
  gradient-based method for bilevel optimization with convex lower-level
  problem,'' in \emph{OPT 2022: Optimization for Machine Learning (NeurIPS 2022
  Workshop)}, 2022.

\bibitem{shen2023online}
L.~Shen, N.~Ho-Nguyen, and F.~K{\i}l{\i}n{\c{c}}-Karzan, ``An online convex
  optimization-based framework for convex bilevel optimization,''
  \emph{Mathematical Programming}, vol. 198, no.~2, pp. 1519--1582, 2023.

\bibitem{tikhonov1963solution}
A.~N. Tikhonov, ``On the solution of ill-posed problems and the method of
  regularization,'' in \emph{Doklady akademii nauk}, vol. 151, no.~3.\hskip 1em
  plus 0.5em minus 0.4em\relax Russian Academy of Sciences, 1963, pp. 501--504.

\bibitem{yousefian2021bilevel}
F.~Yousefian, ``Bilevel distributed optimization in directed networks,'' in
  \emph{2021 American Control Conference (ACC)}.\hskip 1em plus 0.5em minus
  0.4em\relax IEEE, 2021, pp. 2230--2235.

\bibitem{gong2021bi}
C.~Gong and X.~Liu, ``Bi-objective trade-off with dynamic barrier gradient
  descent,'' \emph{NeurIPS 2021}, 2021.

\bibitem{Beck2017}
A.~Beck, \emph{First-Order Methods in Optimization}.\hskip 1em plus 0.5em minus
  0.4em\relax Philadelphia, PA: MOS-SIAM Series on Optimization, Society for
  Industrial and Applied Mathematics (SIAM), 2017.

\bibitem{studniarski1999weak}
M.~Studniarski and D.~E. Ward, ``Weak sharp minima: characterizations and
  sufficient conditions,'' \emph{SIAM Journal on Control and Optimization},
  vol.~38, no.~1, pp. 219--236, 1999.

\bibitem{cgbio}
Raymond30, ``{CG-BiO},'' \url{https://github.com/Raymond30/CG-BiO}.

\bibitem{cvx}
M.~Grant and S.~Boyd, ``{CVX: Matlab software for disciplined convex
  programming, version 2.1},'' \url{http://cvxr.com/cvx}, Mar. 2014.

\end{thebibliography}

\end{document}